\documentclass[vecarrow]{svmult}

\usepackage{makeidx}         
\usepackage{graphicx}        
\usepackage{psfrag}          
\usepackage{multicol}        
\usepackage{amsmath}
\usepackage{amssymb}
\usepackage{amsfonts}
\usepackage{latexsym}
\usepackage[T1]{fontenc}
\usepackage{subfigure}                      
\usepackage{float}
\usepackage{mathrsfs}

\begin{document}

\title{On the Spectrum of Volume Integral Operators in Acoustic Scattering}

\titlerunning{Volume Integral Operators in Acoustic Scattering}

\author{M. Costabel}

\authorrunning{M. Costabel}

\institute{IRMAR, Universit\'e de Rennes 1, France;
\texttt{martin.costabel@univ-rennes1.fr}}

\maketitle

\newcommand{\oH}{{\mathaccent'27 H}}
\newcommand{\grad}{\nabla}
\newcommand{\C}{\mathbb{C}}
\newcommand{\Id}{\mathbb{I}}
\newcommand{\R}{\mathbb{R}}
\newcommand{\Div}{\mathop{\mathrm{div}}}

\section{Volume Integral Equations in Acoustic Scattering}
Volume integral equations have been used as a theoretical tool in scattering theory for a long time. A classical application is an existence proof for the scattering problem based on the theory of Fredholm integral equations. This approach is described for acoustic and electromagnetic scattering in the books by Colton and Kress \cite{CoKr83,CoKr98} where volume integral equations appear under the name ``Lippmann-Schwinger equations''.

In electromagnetic scattering by penetrable objects, the volume integral equation (VIE) method has also been used for numerical computations. In particular the class of discretization methods known as ``discrete dipole approximation'' \cite{PuPe73,DrFl94} has become a standard tool in computational optics applied to atmospheric sciences, astrophysics and recently to nano-science under the keyword ``optical tweezers'', see the survey article \cite{YuHo07} and the literature quoted there. 
In sharp contrast to the abundance of articles by physicists describing and analyzing applications of the VIE method, the mathematical literature on the subject consists only of a few articles. An early spectral analysis of a VIE for magnetic problems was given in \cite{FriedmanPasciak1984}, and more recently \cite{Kirsch2007,KirschLechleiter2009} have found sufficient conditions for well-posedness of the VIE in electromagnetic and acoustic scattering with variable coefficients. In \cite{CoDarKo2010,CoDarSak2012}, we investigated the essential spectrum of the VIE in electromagnetic scattering under general conditions on the complex-valued coefficients, finding necessary and sufficient conditions for well-posedness in the sense of Fredholm in the physically relevant energy spaces. A detailed presentation of these results can be found in the thesis \cite{SaklyThesis2014}. Publications based on the thesis are in preparation.
Curiously, whereas the study of VIE in electromagnetic scattering has thus been completed as far as questions of Fredholm properties are concerned, the simpler case of acoustic scattering does not seem to have been covered in the same depth. It is the purpose of the present paper to close this gap.

The basic idea of the VIE method in scattering by a penetrable object is to consider the effect of the scatterer as a perturbation of a whole-space constant coefficient problem and to solve the latter by convolution with the whole-space fundamental solution. In the acoustic case, we consider the scalar linear elliptic equation
\begin{equation}
\label{e:scat}
\Div a(x) \grad u + k(x)^{2} u = f \qquad \mbox{ in }\R^{d}
\end{equation}
where we suppose that the (in general complex-valued) coefficients $a$ and $k$ are constant outside of a compact set:
$$
  a(x) \equiv 1,\quad  k(x)\equiv k\in\C \quad\mbox{ outside of the bounded domain } \Omega.
$$
and $f$ has compact support. We further assume that $u$ satisfies the outgoing Sommerfeld radiation condition.
It is well known that under very mild conditions on the regularity of the coefficients $a$ and $k$, there is at most one solution of this problem.

We then rewrite \eqref{e:scat} as a perturbed Helmholtz equation.
\begin{equation}
\label{e:perturb}
 (\Delta + k^{2}) u = f - \Div\alpha\grad u - \beta u
\end{equation}
with 
$$
 \alpha(x)= a(x)-1,\; \beta(x)=k(x)^{2}-k^{2}\,.
$$

Let now $G_{k}$ be the outgoing full-space fundamental solution of the Helm\-holtz equation, i.e. the unique distribution in $\R^{d}$ satisfying $(\Delta + k^{2}) G_{k}=-\delta$ and the Sommerfeld radiation condition. In dimension $d=3$, we have 
$$
  G_{k}(x) = \frac{e^{ik|x|}}{4\pi|x|}\,.
$$
We obtain the VIE from the following well known lemma.
\begin{lemma}
\label{l:conv}
Let $u$ be a distribution in $\R^{d}$ satisfying $-(\Delta + k^{2}) u = v$, where $v$ has compact support, and the Sommerfeld radiation condition. Then $u=G_{k}*v$, and if $v$ is an integrable function, the convolution can be written as an integral:
$$
  u(x) = \int G_{k}(x-y)\,v(y)\,dy \;.
$$
\end{lemma}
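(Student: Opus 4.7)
The plan is to prove the lemma by a standard uniqueness argument for the Helmholtz equation with radiation condition. Define $w := G_{k}*v$, which is well defined as a distribution in $\R^{d}$ since $v$ has compact support. The goal is to show $u=w$.

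First, I would verify that $w$ satisfies the same inhomogeneous Helmholtz equation as $u$. Using the defining property $(\Delta+k^{2})G_{k}=-\delta$ and the fact that convolution with a compactly supported distribution commutes with constant-coefficient differential operators, one obtains
$$
 -(\Delta+k^{2})w \;=\; -\bigl((\Delta+k^{2})G_{k}\bigr)*v \;=\; \delta*v \;=\; v.
$$
Next, I would check that $w$ satisfies the outgoing Sommerfeld radiation condition. Since $v$ has compact support, elliptic regularity gives that $w$ is smooth (indeed real-analytic) outside $\operatorname{supp} v$, and there the convolution may be written as the pointwise integral $w(x)=\int G_{k}(x-y)v(y)\,dy$. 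For $|x|\to\infty$ with $y$ confined to a compact set, an expansion of $|x-y|=|x|-\hat x\cdot y+O(|x|^{-1})$ in the explicit expression for $G_{k}$ (or its analogues in other dimensions) shows that $w$ behaves like an outgoing spherical wave, and differentiation under the integral sign yields the Sommerfeld decay estimate uniformly in direction.

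The remaining step is the uniqueness argument. The difference $\phi := u - w$ is a distributional solution of the homogeneous equation $(\Delta+k^{2})\phi=0$ on all of $\R^{d}$, hence smooth by elliptic regularity, and it satisfies the Sommerfeld radiation condition since both $u$ and $w$ do. By Rellich's lemma (in conjunction with unique continuation), the only such solution is $\phi\equiv 0$, so $u=G_{k}*v$. When $v\in L^{1}$, the convolution coincides with the absolutely convergent Lebesgue integral written in the statement, because $G_{k}$ is locally integrable in $\R^{d}$ for $d\ge 2$.

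The main obstacle I expect is the radiation-condition check for $w$: one must be careful that the estimate $(\partial_{r}-ik)w = o(r^{-(d-1)/2})$ holds uniformly in the angular variable, which requires controlling the asymptotics of $\nabla_{x}G_{k}(x-y)$ rather than just of $G_{k}(x-y)$. All other steps are routine applications of standard convolution calculus and the classical uniqueness theorem.
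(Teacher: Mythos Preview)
Your argument is correct and is the standard one: show that $w=G_{k}*v$ solves the same inhomogeneous Helmholtz problem with the outgoing radiation condition, and then invoke uniqueness (Rellich's lemma) for the difference $u-w$. The paper, however, gives no proof of this lemma at all; it is introduced as a ``well known lemma'' and used immediately, so there is nothing to compare against beyond noting that your write-up supplies exactly the textbook justification the paper omits.

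One small point worth tightening: when $v$ is merely a compactly supported distribution (not integrable), the formula $w(x)=\int G_{k}(x-y)\,v(y)\,dy$ outside $\operatorname{supp}v$ should be read as the distributional pairing $w(x)=\langle v,\,G_{k}(x-\cdot)\rangle$, which is legitimate because $G_{k}(x-\cdot)$ is smooth in a neighborhood of $\operatorname{supp}v$ for such $x$. The radiation condition for $w$ then follows since the pairing involves only finitely many derivatives of $G_{k}$ in $y$, each of which can be converted to an $x$-derivative, and $G_{k}$ together with all its derivatives satisfies the outgoing Sommerfeld condition. With this adjustment the proof is complete.
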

Applying this lemma to  \eqref{e:perturb}, we obtain the equation
$$
 u = -G_{k}*f + \Div G_{k}*(\alpha\grad u) + G_{k}*(\beta u),
$$
valid in the distributional sense on $\R^{d}$. This can be written as a VIE
\begin{equation}
\label{e:vie}
u(x) - \Div\int_{\Omega}G_{k}(x-y)\alpha(y)\grad u(y)\, dy
      -   \int_{\Omega} G_{k}(x-y)\beta(y) u(y) \,dy 
      = u^{\rm inc}(x)
\end{equation}
where we use the notation 
$$
  u^{\rm inc}(x):=-\int G_{k}(x-y)f(y)\,dy\;.
$$
The fact that the coefficients $\alpha$ and $\beta$ vanish outside of $\Omega$ permits to consider the integral equation \eqref{e:vie} on any domain $\widehat\Omega$ satisfying $\Omega\subset\widehat\Omega\subset\R^{d}$. Once $u$ solves \eqref{e:vie} on $\widehat\Omega$, one can use the same formula \eqref{e:vie} to extend $u$ outside of $\widehat\Omega$. It is clear that the resulting function $u$ will not depend on $\widehat\Omega$ and will be a solution of the original scattering problem \eqref{e:scat}. In the following we will make the minimal choice $\widehat\Omega=\Omega$ and therefore consider \eqref{e:vie} as an integral equation on $\Omega$. 
We shall abbreviate this integral equation as
\begin{equation}
\label{e:Au=f}
 u - Au = u^{\rm inc}
\end{equation}
with
\begin{equation}
\label{e:A}
 Au(x) = \Div\int_{\Omega}G_{k}(x-y)\alpha(y)\grad u(y)\, dy
      +   \int_{\Omega} G_{k}(x-y)\beta(y) u(y) \,dy \,.
\end{equation}
Assuming that $\Omega$ is a bounded Lipschitz domain, one can consider the VIE \eqref{e:Au=f} in the standard Sobolev spaces $H^{s}(\Omega)$. The natural energy space associated with the second order PDE \eqref{e:scat} is $H^{1}(\Omega)$, but other values of $s$ can be interesting, too, in particular $s=0$, i.e. the space $L^{2}(\Omega)$, which  
seems naturally associated with the apparent structure of \eqref{e:Au=f} as a second kind integral equation and may be useful for analyzing certain numerical algorithms for its solution.

The convolution with $G_{k}$ is a pseudodifferential operator of order $-2$, mapping distributions with compact support and Sobolev regularity $s$ to $H^{s+2}_{\rm loc}(\R^{d})$ for any $s\in\R$, which implies immediately boundedness of the operator $A$ in low order Sobolev spaces:
\begin{proposition}
\label{p:bddL2}
Let $\alpha,\beta\in L^{\infty}(\Omega)$. Then 
$$
  A: H^{1}(\Omega) \to H^{1}(\Omega) \; \mbox{ is bounded }.
$$
If in addition $\grad \alpha\in L^{\infty}(\Omega)$, then $A$ is a bounded operator in $L^{2}(\Omega)$.
\end{proposition}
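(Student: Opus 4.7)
The plan is to split $A=A_{1}+A_{2}$, where
\[
A_{1}u(x) := \Div\int_{\Omega} G_{k}(x-y)\alpha(y)\grad u(y)\,dy,\qquad
A_{2}u(x) := \int_{\Omega} G_{k}(x-y)\beta(y)u(y)\,dy,
\]
and to exploit throughout the mapping property recalled just before the proposition: convolution with $G_{k}$ is a pseudodifferential operator of order $-2$ sending compactly supported distributions of Sobolev regularity $s$ to $H^{s+2}_{\mathrm{loc}}(\R^{d})$. Write $(\cdot)_{0}$ for zero-extension from $\Omega$ to $\R^{d}$. For $u\in H^{1}(\Omega)$ both $\alpha\grad u$ and $\beta u$ lie in $L^{2}(\Omega)$, so their zero-extensions are compactly supported $L^{2}$ distributions on $\R^{d}$; thus $G_{k}*(\alpha\grad u)_{0}\in H^{2}_{\mathrm{loc}}$, giving $A_{1}u\in H^{1}_{\mathrm{loc}}$, while $A_{2}u\in H^{2}_{\mathrm{loc}}\subset H^{1}_{\mathrm{loc}}$. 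Restriction to the bounded Lipschitz domain $\Omega$ is continuous and yields the $H^{1}\to H^{1}$ bound.

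For the $L^{2}\to L^{2}$ claim the difficulty is that $\grad u$ is only in $H^{-1}(\Omega)$, so $\alpha\grad u$ is no longer a function. The key idea is to move the derivative off of $u$ via the Leibniz identity
\[
\alpha\grad u = \grad(\alpha u) - u\grad\alpha,
\]
which is meaningful precisely because of the new hypothesis $\grad\alpha\in L^{\infty}(\Omega)$. Using the natural distributional extension $(\alpha\grad u)_{0}:=\grad(\alpha u)_{0}-(u\grad\alpha)_{0}\in H^{-1}(\R^{d})$, commuting the gradient with the convolution, and invoking $\Delta G_{k}=-\delta-k^{2}G_{k}$, one obtains on $\Omega$ the identity
\[
A_{1}u \;=\; -\alpha u \;-\; k^{2}\,G_{k}*(\alpha u)_{0} \;-\; \Div\bigl(G_{k}*(u\grad\alpha)_{0}\bigr).
\]
Each term is separately bounded on $L^{2}(\Omega)$: the multiplier $\alpha\in L^{\infty}$ handles the first; $G_{k}*(\alpha u)_{0}|_{\Omega}\in H^{2}(\Omega)\subset L^{2}(\Omega)$ since $(\alpha u)_{0}\in L^{2}(\R^{d})$ has compact support; and $\Div\bigl(G_{k}*(u\grad\alpha)_{0}\bigr)|_{\Omega}\in H^{1}(\Omega)\subset L^{2}(\Omega)$ by the same reasoning applied to $(u\grad\alpha)_{0}\in L^{2}(\R^{d})$. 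The $A_{2}$ term is treated exactly as in the $H^{1}$ case.

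The principal technical hurdle is justifying the distributional reinterpretation above. Starting from $u\in H^{1}(\Omega)$ and the pointwise identity $\alpha\grad u=\grad(\alpha u)-u\grad\alpha$, passing the gradient through the zero-extension a priori produces an additional boundary layer contribution $(\alpha u)|_{\partial\Omega}\vec\nu\,\delta_{\partial\Omega}$, which has no analogue for generic $u\in L^{2}(\Omega)$. The clean resolution I expect is to adopt the displayed bulk formula as the definition of $A_{1}$ on $L^{2}(\Omega)$ and verify that, on the dense subspace $H^{1}(\Omega)$, it agrees with the operator of \eqref{e:A} up to a single-layer boundary contribution whose mapping properties on the Lipschitz boundary $\partial\Omega$ are well understood. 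The $L^{2}$-estimates themselves follow directly from the pseudodifferential mapping property of $G_{k}*$; this boundary bookkeeping, together with the use of the Lipschitz regularity of $\Omega$, is the only genuinely delicate point.
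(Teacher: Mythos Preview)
Your $H^{1}\to H^{1}$ argument is exactly the paper's: the sentence preceding the proposition \emph{is} the proof, and you have unpacked it correctly.

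For the $L^{2}$ claim the paper's implicit argument is shorter than yours. One extends $u$ by zero to $u_{0}\in L^{2}(\R^{d})$, so that $\nabla u_{0}$ is a compactly supported element of $H^{-1}(\R^{d})$; the hypothesis $\alpha\in W^{1,\infty}$ is precisely what makes multiplication by (a Lipschitz extension of) $\alpha$ bounded on $H^{-1}(\R^{d})$, by duality with its action on $H^{1}(\R^{d})$. Hence $\alpha\nabla u_{0}\in H^{-1}$ with compact support, $G_{k}*(\alpha\nabla u_{0})\in H^{1}_{\mathrm{loc}}$, and the divergence lands in $L^{2}_{\mathrm{loc}}$. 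No Leibniz decomposition is needed. Your route via $\alpha\nabla u=\nabla(\alpha u)-u\nabla\alpha$ is of course equivalent and is in fact the computation the paper carries out later, in Section~3, where it leads to the representation \eqref{e:A1+D}.

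Where you go slightly astray is in the last paragraph. First, the boundary contribution produced by passing $\nabla$ through the zero extension is, after convolution with $G_{k}$ and taking the divergence, the \emph{double} layer potential $\mathscr{D}\gamma(\alpha u)$, not a single layer. Second, and more importantly, there is nothing to ``resolve'' here by invoking its mapping properties: the paper states explicitly (just after \eqref{e:A1+D}) that $u\mapsto\mathscr{D}\gamma(\alpha u)$ is \emph{not} continuous in the $L^{2}(\Omega)$ norm, so the $H^{1}$-operator $\Id-A$ does \emph{not} extend continuously from $H^{1}(\Omega)$ to $L^{2}(\Omega)$. The $L^{2}$ operator in the proposition is, by construction, the one obtained through zero extension of $u$ (your ``bulk formula''); it agrees with the $H^{1}$ operator only on $H^{1}_{0}(\Omega)$, and the paper accepts this as two different operators. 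Your instinct to define the $L^{2}$ operator by the bulk formula is therefore correct; the attempt to reconcile it with the $H^{1}$ operator on all of $H^{1}(\Omega)$ via boundary estimates would fail and is not what the proposition asserts.
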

Another immediate observation is that the second integral operator in \eqref{e:A} maps $L^{2}$ to $H^{2}$, and is therefore compact as an operator in $L^{2}$ and in $H^{1}$. This is relevant if $a(x)$ is constant everywhere, since then $\alpha\equiv0$ and the first integral operator in \eqref{e:A}, which is not compact, in general, is absent.
\begin{theorem}
\label{t:a=1}
Let $a(x)=1$ in $\R^{d}$ and $k\in L^{\infty}(\R^{d})$. Then the VIE \eqref{e:vie} is a second kind Fredholm integral equation with a weakly singular kernel and the Fredholm alternative holds: The operator $\Id -A$ is a Fredholm operator of index zero in $L^{2}(\Omega)$ and in $H^{1}(\Omega)$. 
\end{theorem}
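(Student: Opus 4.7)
The plan is to exploit the fact that when $a(x)\equiv 1$, the coefficient $\alpha$ vanishes identically, so the operator in \eqref{e:A} collapses to the single volume potential
\[
  Au(x) = \int_{\Omega} G_{k}(x-y)\,\beta(y)\, u(y)\,dy,
\]
which has a weakly singular kernel because $G_{k}(x)$ has only a $|x|^{2-d}$ singularity at the origin while $\beta\in L^\infty(\Omega)$. Fredholmness of index zero for $\Id-A$ will then be obtained from the Riesz--Schauder theorem, once compactness of $A$ has been established in each of the two spaces.

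The key regularity step is to show that $A$ maps both $L^{2}(\Omega)$ and $H^{1}(\Omega)$ boundedly into $H^{2}(\Omega)$. Since $\beta\in L^\infty(\R^{d})$ vanishes outside $\Omega$, multiplication by $\beta$ is a bounded map from $L^{2}(\Omega)$ into compactly supported $L^{2}$-distributions on $\R^{d}$. Convolution with $G_{k}$ is a pseudodifferential operator of order $-2$, so it sends such compactly supported $L^{2}$-distributions into $H^{2}_{\mathrm{loc}}(\R^{d})$. Restricting back to $\Omega$ yields the bounded operator $A\colon L^{2}(\Omega)\to H^{2}(\Omega)$, and since $H^{1}(\Omega)\subset L^{2}(\Omega)$, also $A\colon H^{1}(\Omega)\to H^{2}(\Omega)$ boundedly.

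Because $\Omega$ is a bounded Lipschitz domain, Rellich's theorem supplies compact embeddings $H^{2}(\Omega)\hookrightarrow H^{1}(\Omega)$ and $H^{2}(\Omega)\hookrightarrow L^{2}(\Omega)$. Composing these embeddings with the bounded maps above shows that $A$ is compact as an operator on $L^{2}(\Omega)$ and as an operator on $H^{1}(\Omega)$. Consequently $\Id-A$ is a compact perturbation of the identity on each of the two Hilbert spaces and is therefore a Fredholm operator of index zero, which is precisely the Fredholm alternative claimed.

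No part of the argument is genuinely difficult; this is the textbook picture for second kind integral equations with smoothing kernels. The only problem-specific ingredient is the $-2$ order smoothing property of convolution with $G_{k}$, which is recalled in the paragraph preceding Proposition~\ref{p:bddL2}, and the Lipschitz regularity of $\partial\Omega$ intervenes only through the application of Rellich compactness. The real work in the paper will concern the case $\alpha\not\equiv0$, where the derivative $\Div$ in front of the convolution destroys the smoothing and prevents a direct compactness argument.
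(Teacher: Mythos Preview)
Your argument is correct and matches the paper's own reasoning: the paragraph immediately preceding the theorem already records that when $\alpha\equiv0$ the remaining integral operator maps $L^{2}$ to $H^{2}$ and is therefore compact in both $L^{2}(\Omega)$ and $H^{1}(\Omega)$, which is exactly the smoothing-plus-Rellich mechanism you spell out. The paper treats this as an immediate observation rather than giving a separate proof, so your write-up simply makes the implicit steps explicit.
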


\section{Smooth Coefficients}
Besides the case of the Laplace operator addressed in Theorem~\ref{t:a=1}, another situation is well known and is studied for example in the book \cite{CoKr83}. This is the case of a coefficient $a(x)$ that is smooth on all of $\R^{d}$. In this case, $\alpha=0$ on the boundary $\Gamma=\partial\Omega$, and the first integral operator in \eqref{e:A} can be transformed by integration by parts.
\begin{multline*}
 \Div G_{k}*(\alpha\grad u)(x) = 
 -\Div\int_{\Omega}\nabla_{y}\big(G_{k}(x-y)\alpha(y)\big)\,u(y)\,dy\\
 = \Delta\int_{\Omega}G_{k}(x-y)\alpha(y)u(y)\,dy 
 -\Div\int_{\Omega}G_{k}(x-y)(\grad\alpha)(y)\,u(y)\,dy\\
 = -\alpha(x)u(x)  -k^{2}\!\!\int_{\Omega}G_{k}(x-y)\alpha(y)u(y)\,dy 
 -\Div\!\int_{\Omega}G_{k}(x-y)(\grad\alpha)(y)\,u(y)\,dy.
\end{multline*}
This allows to write the VIE \eqref{e:vie} in an equivalent form that shows its nature as a Fredholm integral equation of the second kind with a weakly singular kernel.
\begin{equation}
\label{e:viesmooth}
\begin{split}
 &a(x)u(x) 
      -   \int_{\Omega} G_{k}(x-y)(\beta(y)-k^{2}\alpha(y)) u(y) \,dy\\
       &\quad+\Div\int_{\Omega}G_{k}(x-y)(\grad\alpha)(y)\,u(y)\,dy
        - \int_{\Omega} G_{k}(x-y)\beta(y) u(y) \,dy
      = u^{\rm inc}(x) 
\end{split}
\end{equation}
\begin{theorem}
\label{t:asmooth}
Let $a\in C^{1}(\R^{d})$ and $k\in L^{\infty}(\R^{d})$. Then the operator $\Id -A$ is a Fredholm operator of index zero in $L^{2}(\Omega)$ and in $H^{1}(\Omega)$ if and only if $a(x)\ne0$ for all $x\in\Omega$.
\end{theorem}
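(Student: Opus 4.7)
The plan is to exploit the rewriting of $A$ afforded by the integration-by-parts identity displayed above the theorem: in both $L^{2}(\Omega)$ and $H^{1}(\Omega)$ one has $\Id - A = M_{a} + K$, where $M_{a}$ denotes multiplication by $a$ and $K$ is the sum of the three integral operators appearing in \eqref{e:viesmooth}, whose kernels involve $G_{k}$ against $\alpha$, $\beta$, or the components of $\grad\alpha$ (the last inside a divergence). The proof then reduces to checking that $K$ is compact and that $M_{a}$ has the appropriate Fredholm properties.

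First I would establish the compactness of $K$. Each summand has the form $u \mapsto P(G_{k}*(m\,u))$, where $m \in L^{\infty}(\R^{d})$ is compactly supported (taking $m=\alpha$, $m=\beta$, or $m=\partial_{i}\alpha$, all bounded since $a \in C^{1}(\R^{d})$) and $P$ is either the identity or a first-order partial derivative. Because $G_{k}*$ is a pseudodifferential operator of order $-2$, the composition $u \mapsto P(G_{k}*(m\,u))$ maps $L^{2}(\Omega)$ boundedly into $H^{1}(\Omega)$ even in the worst case $P=\partial_{i}$. Rellich's compact embedding $H^{1}(\Omega) \hookrightarrow L^{2}(\Omega)$ then yields compactness on $L^{2}(\Omega)$. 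For compactness on $H^{1}(\Omega)$, I would factor through the compact inclusion $H^{1}(\Omega) \hookrightarrow L^{2}(\Omega)$ and compose with the bounded map $L^{2}(\Omega) \to H^{1}(\Omega)$ just established.

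For the sufficient direction, the assumption $a \equiv 1$ outside $\Omega$ combined with continuity forces $a=1$ on $\partial\Omega$, so $a(x) \ne 0$ on $\Omega$ together with continuity implies $|a| \ge c > 0$ on $\overline{\Omega}$. Hence $1/a \in C^{1}(\overline{\Omega})$ and $M_{a}$ is an invertible bounded operator on both $L^{2}(\Omega)$ and $H^{1}(\Omega)$. The factorisation $\Id - A = M_{a}(\Id + M_{a}^{-1}K)$ exhibits $\Id - A$ as the product of an invertible operator and one of the form $\Id + \text{compact}$, hence as a Fredholm operator of index zero.

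For the converse, if $a(x_{0})=0$ at some $x_{0}\in\Omega$, I would rule out Fredholmness of $M_{a}$ directly: rescaled smooth bumps $\psi_{n}$ concentrated at $x_{0}$ and normalized to unit norm in the relevant space provide a sequence with $\|M_{a}\psi_{n}\| \to 0$ that has no convergent subsequence, contradicting the lower bound $\|u\| \le C\|M_{a}u\|$ that a Fredholm operator with trivial kernel (which $M_{a}$ has, $a$ vanishing only on a null set) would satisfy. Since Fredholm properties are stable under the compact perturbation $K$, Fredholmness of $\Id - A = M_{a}+K$ would imply that of $M_{a}$, so $\Id - A$ cannot be Fredholm. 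The only mildly delicate point is the $H^{1}$-compactness of $u \mapsto \Div G_{k}*(\grad\alpha\cdot u)$, since the divergence consumes the derivatives gained from $G_{k}$ so that the naive $H^{1}\to H^{1}$ map is only bounded; the compactness must instead be harvested from Rellich applied on the input side, as explained above.
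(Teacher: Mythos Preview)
Your proposal is correct and follows precisely the approach implicit in the paper: the paper's ``proof'' consists entirely of the integration-by-parts rewriting \eqref{e:viesmooth}, which exhibits $\Id-A$ as $M_a$ plus a compact remainder, and you have filled in the details (compactness via the $L^2\to H^1$ smoothing and Rellich, invertibility of $M_a$, Weyl sequence for the converse) that the paper leaves to the reader. One small slip: your parenthetical claim that $M_a$ has trivial kernel because ``$a$ vanishes only on a null set'' is not justified---a $C^1$ function can vanish on a ball inside $\Omega$---but this is harmless, since your singular sequence $\psi_n$ (bounded, $M_a\psi_n\to0$, no convergent subsequence) already shows $M_a$ fails to be upper semi-Fredholm without any appeal to the kernel; alternatively, if $a$ vanishes on a set of positive measure the kernel is infinite-dimensional and you are done immediately.
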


\section{Piecewise Smooth Coefficients}
In obstacle scattering, the case of a globally smooth coefficient $a(x)$ is not natural. There one expects rather a sharp interface where the material properties change discontinuously. We thus assume that the coefficient $a$ is piecewise $C^{1}$, which means that $\alpha\in C^{1}(\overline\Omega)$.

One can then still carry out the partial integration as in the previous section, but there will appear an additional term on the boundary $\Gamma=\partial\Omega$:
\begin{multline*}
 \Div G_{k}*(\alpha\grad u)(x) \\
 = -\Div\int_{\Omega}\nabla_{y}\big(G_{k}(x-y)\alpha(y)\big)\,u(y)\,dy
   + \Div\int_{\Gamma}n(y)G_{k}(x-y)\alpha(y)u(y)\,ds(y)\\
 = -\alpha(x)u(x)  -k^{2}\int_{\Omega}G_{k}(x-y)\alpha(y)u(y)\,dy 
     -\Div\int_{\Omega}G_{k}(x-y)(\grad\alpha)(y)\,u(y)\,dy\\
   - \int_{\Gamma}\partial_{n(y)}G_{k}(x-y)\alpha(y)u(y)\,ds(y)\,.
\end{multline*}
The additional term is just the Helmholtz double layer potential with density $\alpha u$, which we can abbreviate as
$
  \mathscr{D}\gamma(\alpha u)\,.
$
Here $\gamma:H^{1}(\Omega) \to H^{\frac12}(\Gamma)$ is the trace operator. We obtain our volume integral operator in the form
\begin{equation}
\label{e:A1+D}
 (\Id - A)u(x) = a(x) u(x) + A_{1}u(x) + \mathscr{D}\gamma(\alpha u)(x)
\end{equation}
with
\begin{multline*}
  A_{1} u(x) = 
  -k^{2}\int_{\Omega}G_{k}(x-y)\alpha(y)u(y)\,dy \\
     +\Div\int_{\Omega}G_{k}(x-y)(\grad\alpha)(y)\,u(y)\,dy
     - \int_{\Omega} G_{k}(x-y)\beta(y) u(y) \,dy\,.
\end{multline*}
The operator $A_{1}$ is bounded from $L^{2}(\Omega)$ to $H^{1}(\Omega)$, hence compact as an operator in $H^{1}(\Omega)$. 

The operator $u\mapsto \mathscr{D}\gamma(\alpha u)$ is bounded in $H^{1}(\Omega)$ but not compact, in general. It is also not continuous with respect to the $L^{2}(\Omega)$-norm of $u$. This implies that the operator $\Id-A$, despite being generated from a pseudodifferential operator of order zero, does not have a continuous extension to $L^{2}(\Omega)$ from the dense subspace $H^{1}(\Omega)$. It does have a continuous extension to $L^{2}(\Omega)$ from the subspace $H^{1}_{0}(\Omega)$, but this is a different operator, where the last term in \eqref{e:A1+D} is missing. 

\subsection{Extension to a Boundary-Domain System}
From the VIE \eqref{e:Au=f} with the integral operator written in the form \eqref{e:A1+D}, we can get an equation on the boundary by taking the trace on $\Gamma$:
\begin{equation}
\label{e:onGamma}
 a\gamma  u + \gamma A_{1}u + \gamma \mathscr{D}\gamma(\alpha u) = \gamma u^{\rm inc}\,.
\end{equation}
We now treat the trace $\gamma u$ as if it was an additional unknown, denoted by $\phi$, and consider the two equations \eqref{e:Au=f} and \eqref{e:onGamma} as a coupled boundary-domain integral equation system.

Taking into account the jump relation for the double layer potential
$$
  \gamma\mathscr{D}\phi = -\tfrac12 \phi + K\phi,
$$
where $K$ is the Helmholtz double layer potential operator evaluated on $\Gamma$, as well as the fact that the commutator $[K,\alpha]$ between $K$ and the multiplication by $\alpha$ is compact in the trace space $H^{\frac12}(\Gamma)$, we can write this coupled system in the following matrix form.
\begin{equation}
\label{e:bdry-domain}
 \begin{pmatrix}
   a\Id+A_{1} & \mathscr{D}(\gamma\alpha\cdot)\\
   \gamma A_{1} &\frac12(1+a)\Id +\alpha K + [K,\alpha]
 \end{pmatrix}
 \begin{pmatrix} u\\ \phi \end{pmatrix}
 =
  \begin{pmatrix} u^{\rm inc}\\ \psi \end{pmatrix}
\end{equation}
It is easy to see that this system is equivalent to the original VIE in the following sense.
\begin{proposition}
\label{p:equiv}
 Let $\Omega$ be a bounded Lipschitz domain with boundary $\Gamma$. 
 Let $\alpha\in C^{1}(\overline\Omega)$ and $\beta\in L^{\infty}(\Omega)$,
 and let $u^{\rm inc}\in H^{1}(\Omega)$ be given.\\
 If $u\in H^{1}(\Omega)$ is a solution of the VIE \eqref{e:Au=f}, then 
 $\begin{pmatrix}u\\\phi\end{pmatrix}=\begin{pmatrix}u\\\gamma u\end{pmatrix}$
 solves the coupled system \eqref{e:bdry-domain} with $\psi=\gamma u^{\rm inc}$.\\
 Conversely, let $\psi\in H^{\frac12}(\Gamma)$ be given and 
 $\begin{pmatrix}u\\\phi\end{pmatrix}\in H^{1}(\Omega)\times H^{\frac12}(\Gamma)$ be a solution of the coupled system \eqref{e:bdry-domain}. 
If $\psi=\gamma u^{\rm inc}$, and if $\gamma a\ne0$ a.e. on 
$\Gamma$, then $\phi=\gamma u$, and $u$ is a solution of the VIE \eqref{e:Au=f}.
\end{proposition}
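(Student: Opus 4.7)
The plan is to treat the equivalence as a mostly algebraic consequence of the decomposition \eqref{e:A1+D} of $\Id-A$, combined with the jump relation $\gamma\mathscr{D}\psi=-\tfrac12\psi+K\psi$ and the commutator identity $K((\gamma\alpha)\phi)=\alpha K\phi+[K,\alpha]\phi$. With these two ingredients, the forward implication becomes immediate once one applies the trace to the VIE, and the reverse implication reduces to a single cancellation that produces the factor $\gamma a$ on $\Gamma$, which is precisely where the hypothesis $\gamma a\ne 0$ will be used.

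For the forward direction, I would set $\phi=\gamma u$. The identity \eqref{e:A1+D} then turns the VIE directly into the first row of \eqref{e:bdry-domain}, since $\gamma(\alpha u)=(\gamma\alpha)\phi$. Applying $\gamma$ to the VIE and substituting the jump relation together with the commutator splitting gives
$$
 (\gamma a)\phi+\gamma A_1 u -\tfrac12(\gamma\alpha)\phi+\alpha K\phi+[K,\alpha]\phi=\gamma u^{\rm inc}\,.
$$
Using $a=1+\alpha$ on $\Gamma$, one has $\gamma a-\tfrac12\gamma\alpha=\tfrac12(1+\gamma a)$, so this is exactly the second row of \eqref{e:bdry-domain} with $\psi=\gamma u^{\rm inc}$.

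For the reverse direction, given a solution $(u,\phi)$ of \eqref{e:bdry-domain} with $\psi=\gamma u^{\rm inc}$, I would apply $\gamma$ to the first row (using the same jump relation and commutator manipulation) and subtract the second row. The contributions from $\gamma A_1 u$, $\alpha K\phi$ and $[K,\alpha]\phi$ appear with identical coefficients in both expressions and cancel, leaving
$$
 (\gamma a)(\gamma u)-\tfrac12(\gamma\alpha)\phi-\tfrac12(1+\gamma a)\phi=0\,.
$$
Simplifying with $\gamma a=1+\gamma\alpha$ collapses this to $(\gamma a)(\gamma u-\phi)=0$, so the hypothesis $\gamma a\ne 0$ a.e.\ on $\Gamma$ forces $\phi=\gamma u$; the first row of \eqref{e:bdry-domain} then reduces to the VIE. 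The whole argument is bookkeeping; the only delicate point is to verify that the non-compact operator $\alpha K$ and the commutator $[K,\alpha]$ really cancel exactly when one subtracts — which is guaranteed by the way $K((\gamma\alpha)\phi)$ splits — so that the surviving identity involves only multiplication by the scalar function $\gamma a$, making the role of the hypothesis $\gamma a\ne 0$ transparent.
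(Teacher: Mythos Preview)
Your proposal is correct and follows precisely the paper's approach: the forward direction is immediate from the construction, and for the converse you take the trace of the first row of \eqref{e:bdry-domain} and subtract the second to arrive at $\gamma a\,(\gamma u-\phi)=0$. Your version simply spells out the algebra (the jump relation and the commutator cancellation) that the paper leaves implicit.
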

\begin{proof}
The construction of the coupled system shows that it is satisfied by any solution of the VIE and its trace on the boundary. To show the converse, one subtracts the trace of the first equation in \eqref{e:bdry-domain} from the second and finds
$$
   \gamma a\,\big(\gamma u - \phi\big)=0.
$$
Since we assume that $\gamma a$ does not vanish on a set of positive measure, $\phi=\gamma u$ follows.
\end{proof}

\subsection{Lipschitz Boundary}
The system \eqref{e:bdry-domain} is easier to analyze than the original VIE \eqref{e:Au=f}. This is due to the fact that now the main difficulty is pushed to the boundary integral operator $K$, which is a well-studied classical boundary integral operator \cite{CoLip}. Indeed, splitting off the operators that we already have identified as compact operators, and taking into account that the coupling operator
$ \phi\mapsto \mathscr{D}(\gamma\alpha\phi) $
is bounded from $H^{\frac12}(\Gamma)$ to $H^{1}(\Gamma)$ \cite{CoLip}, we see that the Fredholm alternative holds for the system \eqref{e:bdry-domain} (and therefore for the VIE \eqref{e:Au=f}) if and only if the operator
$$
  \widehat A = 
  \begin{pmatrix}
     a\Id & \mathscr{D}(\gamma\alpha\cdot)\\
     0 &\frac12(1+a)\Id +\alpha K
 \end{pmatrix}
$$
is a Fredholm operator of index zero in the space $H^{1}(\Omega)\times H^{\frac12}(\Gamma)$. This, in turn, is the case if and only if both
$$
 a\Id:H^{1}(\Omega)\to H^{1}(\Omega)\quad\mbox{ and }\quad  
 \frac12(1+a)\Id +\alpha K: H^{\frac12}(\Gamma)\to H^{\frac12}(\Gamma)
$$
are Fredholm of index zero. 
We have shown the following result.
\begin{theorem}
\label{t:lip}
 Let $\Omega$ be a bounded Lipschitz domain with boundary $\Gamma$. 
 Let $\alpha\in C^{1}(\overline\Omega)$ and $\beta\in L^{\infty}(\Omega)$.
Then for the VIE \eqref{e:vie} the Fredholm alternative holds in $H^{1}(\Omega)$ if and only if 
\begin{itemize}
\item[\emph{(i) }] $a(x)\ne0$ in $\overline\Omega$ and
\item[\emph{(ii)}] $\frac12(1+a)\Id +\alpha K$ is Fredholm of index zero in $H^{\frac12}(\Gamma)$.
\end{itemize}
\end{theorem}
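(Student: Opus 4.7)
My plan is to lift the VIE to the coupled boundary-domain system \eqref{e:bdry-domain}, strip off the compact entries to reduce to the upper-triangular operator $\widehat A$ already isolated in the text, and then decouple the two diagonal blocks. By Proposition~\ref{p:equiv}, whenever $\gamma a\ne 0$ on $\Gamma$ the VIE in $H^{1}(\Omega)$ is equivalent to the system in $H^{1}(\Omega)\times H^{\frac12}(\Gamma)$, so $\Id-A$ is Fredholm of index zero if and only if the $2\times 2$ operator in \eqref{e:bdry-domain} is. The case where $a$ vanishes somewhere on $\overline\Omega$ I would treat separately by producing a localized $H^{1}$-Weyl sequence at a zero of $a$: for an interior zero one uses rescaled bumps of compact support in $\Omega$, for a boundary zero bumps in $\overline\Omega$ concentrated at that point. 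Such a sequence shows directly that $\Id-A$ is not Fredholm in $H^{1}(\Omega)$.

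Next, I would gather the compact contributions inside \eqref{e:bdry-domain}. The operator $A_{1}$ is bounded $L^{2}(\Omega)\to H^{1}(\Omega)$ and hence compact on $H^{1}(\Omega)$ by Rellich; the trace $\gamma A_{1}:H^{1}(\Omega)\to H^{\frac12}(\Gamma)$ is therefore also compact; and $[K,\alpha]$ on $H^{\frac12}(\Gamma)$ is compact as recalled in the text. Subtracting these leaves precisely $\widehat A$, so by compact-perturbation invariance of the Fredholm index, the VIE is Fredholm of index zero iff $\widehat A$ is. Since $\widehat A$ is upper-triangular with bounded off-diagonal entry $\mathscr{D}(\gamma\alpha\cdot):H^{\frac12}(\Gamma)\to H^{1}(\Omega)$, a standard block-triangular argument then reduces the question to Fredholmness of the diagonal blocks $a\Id$ on $H^{1}(\Omega)$ and $\tfrac12(1+a)\Id+\alpha K$ on $H^{\frac12}(\Gamma)$. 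Since $a\in C^{1}(\overline\Omega)$ and $\overline\Omega$ is compact, the first block is an isomorphism of $H^{1}(\Omega)$ when $a$ has no zero on $\overline\Omega$ (inverse $a^{-1}\Id$ with $a^{-1}\in C^{1}(\overline\Omega)$), and is non-Fredholm otherwise, by the same Weyl-sequence construction. This pins down condition (i); granted (i), the remaining equivalence is exactly condition (ii).

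The main obstacle will be the block-triangular separation: to get a clean ``iff,'' one must show that Fredholmness of $\widehat A$ really forces Fredholmness of each diagonal block on its own, not merely of some coupled combination. The cleanest approach exploits the asymmetry of the two spaces. On the one hand, inputs of the form $(u,0)$ with $u\in C^{\infty}_{c}(\Omega)$ feel only the top-left multiplication block, so any singular sequence for $a\Id$ lifts to a singular sequence for $\widehat A$, and dually on the cokernel side; this forces $a\Id$ itself to be Fredholm whenever $\widehat A$ is, giving (i). On the other hand, once (i) holds, $a\Id$ is a bijection, so the triangular factorization of $\widehat A$ becomes a product of a boundedly invertible operator and an operator acting diagonally through $\tfrac12(1+a)\Id+\alpha K$, yielding (ii). Everything else is routine bookkeeping with compact perturbations.
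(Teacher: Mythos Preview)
Your proposal is correct and follows essentially the same route as the paper: pass from the VIE to the coupled boundary--domain system \eqref{e:bdry-domain}, peel off the compact pieces ($A_{1}$, $\gamma A_{1}$, $[K,\alpha]$) to reduce to the upper-triangular operator $\widehat A$, and then read off Fredholmness from the two diagonal blocks $a\Id$ and $\tfrac12(1+a)\Id+\alpha K$. The paper's argument is the terse paragraph preceding the theorem and leaves implicit exactly the two points you flesh out---the Weyl-sequence necessity of $a\ne0$ on $\overline\Omega$ and the block-triangular ``iff''---so your plan is a faithful expansion of it; just note that for a boundary zero of $a$ you should still take the bumps compactly supported in $\Omega$ (so $\gamma u_{n}=0$ kills the double-layer term), and that the passage from Proposition~\ref{p:equiv} (equivalence of \emph{solutions}) to equivalence of \emph{Fredholmness} is cleanest via the block decomposition $H^{1}(\Omega)\times H^{\frac12}(\Gamma)=\operatorname{graph}(\gamma)\oplus(\{0\}\times H^{\frac12}(\Gamma))$, in which the system operator becomes upper triangular with diagonal blocks $\Id-A$ and multiplication by $\gamma a$.
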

Condition (ii) can be made more precise by using information about the essential spectrum of the operator $\frac12\Id+K$. This operator differs by a compact operator from the corresponding operator for $k=0$, i.e. the trace of the harmonic double layer potential operator. The latter is known to be a positive selfadjoint contraction in $H^{\frac12}(\Gamma)$ if this space is equipped with a suitable scalar product, see \cite{CoRemPos}. 

Therefore its essential spectrum, which is also the essential spectrum of the operator $\frac12\Id+K$, is a compact subset $\Sigma$ of the open interval $(0,1)$. It is known that for any Lipschitz boundary $\frac12\in\Sigma$, that for smooth boundaries 
$\Sigma=\{\frac12\}$, and that for polygons in $\R^{2}$, $\Sigma$ is an interval depending on the corner angles.

If the coefficient function $a$ is piecewise constant, so that $\alpha=a-1$ is a constant on $\Gamma$, the operator $\frac12(1+a)\Id +\alpha K$ is either the identity if $\alpha=0$ or a multiple of the operator $\sigma\Id -(\frac12\Id+K)$ with
\begin{equation}
\label{e:alphasigma}
  \frac{1+a}{2(1-a)}=\sigma-\frac12 \quad\Longleftrightarrow\quad
  a=\frac{\sigma-1}\sigma\,.
\end{equation}
It follows that the operator $\frac12(1+a)\Id +\alpha K$ is Fredholm of index zero if and only if $\sigma\ne\Sigma$.

If the function $\alpha$ is not constant on $\Gamma$, one can use the fact that the operator $K$ commutes modulo compact operators with multiplications by $C^{1}$ functions and apply standard localization procedures. The result is that if for each point $x\in\Gamma$, the number $\sigma$ from \eqref{e:alphasigma} does not belong to the essential spectrum $\Sigma$, then the operator $\frac12(1+a)\Id +\alpha K$ is Fredholm. This condition
\begin{equation}
\label{e:nec}
 \forall\, x\in\Gamma: \quad \frac{1}{1-a(x)} \not\in \Sigma 
\end{equation}
is, in general, only a sufficient condition. In order to obtain a necessary condition, one would need a ``localized'' version $\Sigma_{x}$ of $\Sigma$, which is only known in some cases, namely when $\Gamma$ has a suitable tangent cone at $x$.

We summarize this discussion.
\begin{theorem}
\label{t:lipprec}
Assume the hypotheses of Theorem \ref{t:lip}. Let $\Sigma\subset(0,1)$ be the essential spectrum of the operator $\frac12\Id+K$ in $H^{\frac12}(\Gamma)$.
If the coefficient $a\in C^{1}(\overline\Omega)$ is constant on $\Gamma$, then the volume integral operator $\Id-A$ is Fredholm of index zero in $H^{1}(\Omega)$ if and only if
\begin{itemize}
\item[\emph{(i) }] $a(x)\ne0$ in $\overline\Omega$ and
\item[\emph{(ii)}] $a(x)\ne \frac{\sigma-1}\sigma\quad \mbox{ for } x\in \Gamma,\; \sigma\in\Sigma \,.$
\end{itemize}
If $a$ is not constant on $\Gamma$, then the conditions \emph{(i)} and \emph{(ii)} imply that the volume integral operator is Fredholm in $H^{1}(\Omega)$. 
\end{theorem}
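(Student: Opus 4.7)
The plan is to invoke Theorem~\ref{t:lip} and then refine its condition on the boundary operator
$$
 T \;:=\; \tfrac12(1+a)\Id + \alpha K
$$
into the explicit pointwise statement involving the essential spectrum $\Sigma$ of $\tfrac12\Id-K$. Condition (i) of the present theorem is identical to condition (i) of Theorem~\ref{t:lip}, so the entire task is to characterize (or merely establish) the Fredholm property of $T$ on $H^{\frac12}(\Gamma)$.

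I would first treat the case in which $a$ is constant on $\Gamma$. If $a=1$ on $\Gamma$ then $\alpha\equiv 0$ on $\Gamma$, so $T=\Id$ is trivially Fredholm of index zero, while no $\sigma\in\Sigma\subset(0,1)$ satisfies $(\sigma-1)/\sigma=1$, so condition (ii) is vacuous. If $a\ne 1$ on $\Gamma$, then \eqref{e:alphasigma} allows one to write
$$
 T \;=\; (a-1)\Bigl[\sigma\Id - \bigl(\tfrac12\Id - K\bigr)\Bigr].
$$
The scalar factor $a-1$ is invertible, so $T$ is Fredholm of index zero if and only if $\sigma\notin\Sigma$, which by \eqref{e:alphasigma} is exactly condition (ii). This yields both necessity and sufficiency in the constant case.

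For the case in which $a$ is not constant on $\Gamma$, I would prove only sufficiency, via a standard partition-of-unity localization on $\Gamma$. The key input is that $[K,\chi]$ is compact on $H^{\frac12}(\Gamma)$ for every $\chi\in C^{1}(\Gamma)$. I would cover $\Gamma$ by finitely many open patches $U_j$ on which the oscillation of $\alpha$ (and of $a$) is as small as desired, take a subordinate $C^{1}$ partition of unity $\{\chi_j\}$, and freeze representative values $a_j:=a(x_j)$, $\alpha_j:=\alpha(x_j)$. Writing $\chi_j T$ modulo compacts and modulo a small-norm multiplication error as $\chi_j\bigl[\tfrac12(1+a_j)\Id+\alpha_j K\bigr]$, the constant case supplies a parametrix for each frozen operator, and pasting via $\{\chi_j\}$ gives a parametrix for $T$, showing that $T$ is Fredholm.

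The main obstacle is this localization step in the non-constant case. One has to verify carefully that the sum of the small-oscillation error and the commutator remainder really is compact (or compact plus small norm) on $H^{\frac12}(\Gamma)$, and that the globally pasted local parametrix is a genuine two-sided inverse of $T$ modulo compacts. These are the standard ingredients for operators with $C^{1}$ symbol on a Lipschitz surface, and they rest precisely on the commutator property of $K$ cited just before the theorem.
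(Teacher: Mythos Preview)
Your proposal is correct and follows essentially the same route as the paper: reduce to Theorem~\ref{t:lip}, rewrite $T$ in the constant case via \eqref{e:alphasigma} as $\alpha\bigl[\sigma\Id-(\tfrac12\Id-K)\bigr]$ to read off the necessary and sufficient condition $\sigma\notin\Sigma$, and in the non-constant case invoke the compactness of $[K,\chi]$ together with a standard partition-of-unity localization. The paper merely says ``apply standard localization procedures'' where you spell out the freezing-and-pasting parametrix construction, so your write-up is in fact more detailed than the original argument.
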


\subsection{Smooth Boundary}
If $\Gamma$ is smooth ($C^{1+\epsilon}$ with $\epsilon>0$), then the boundary integral operator $K$ has a weakly singular kernel and is compact in $H^{\frac12}(\Gamma)$. This implies that $\Sigma=\{\frac12\}$ in Theorem~\ref{t:lipprec}. But it also implies directly that the operator $\frac12(1+a)\Id +\alpha K$ is Fredholm of index zero if and only if $1+a$ does not vanish. We obtain immediately as a corollary of Theorem~\ref{t:lip} the following result.
\begin{theorem}
\label{t:smooth}
 Let $\Omega$ be a bounded smooth (Lyapunov) domain. 
 Let $\alpha\in C^{1}(\overline\Omega)$ and $\beta\in L^{\infty}(\Omega)$.
Then for the VIE \eqref{e:vie} the Fredholm alternative holds in $H^{1}(\Omega)$ if and only if 
\begin{itemize}
\item[\emph{(i) }] $a(x)\ne0$ in $\overline\Omega$ and
\item[\emph{(ii)}] $a(x)\ne-1$ on $\Gamma$.
\end{itemize}
\end{theorem}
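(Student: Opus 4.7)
The plan is to derive this directly as a corollary of Theorem~\ref{t:lip}. Condition (i) is identical in both statements, so the entire task is to show that, under the smooth-boundary hypothesis, the Fredholm condition on the boundary operator $\tfrac12(1+a)\Id+\alpha K$ in $H^{\frac12}(\Gamma)$ reduces to the pointwise condition $a(x)\ne -1$ on $\Gamma$.

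First I would invoke the classical fact that on a $C^{1+\epsilon}$ surface the kernel $\partial_{n(y)}G_{k}(x-y)$ of $K$ is only weakly singular: the H\"older continuity of the normal together with the Taylor expansion of $\Gamma$ at $y$ gives the geometric cancellation $n(y)\cdot(x-y)=O(|x-y|^{1+\epsilon})$, which lowers the apparent order of the singularity below $d-1$. As cited in \cite{CoLip}, this implies that $K:H^{\frac12}(\Gamma)\to H^{\frac12}(\Gamma)$ is compact. Since $\alpha\in C^{1}(\overline\Omega)$ induces a bounded multiplier on $H^{\frac12}(\Gamma)$, the composition $\alpha K$ is compact as well.

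Second, since compact perturbations preserve both the Fredholm property and the index, the operator $\tfrac12(1+a)\Id+\alpha K$ is Fredholm of index zero on $H^{\frac12}(\Gamma)$ if and only if the multiplication operator $m_{f}$ with $f=\tfrac12(1+a)$ is. The remaining step is the elementary observation that for $f\in C^{1}(\Gamma)$, the operator $m_{f}$ on $H^{\frac12}(\Gamma)$ is an isomorphism whenever $f$ has no zero on $\Gamma$ (the inverse being $m_{1/f}$, with $1/f\in C^{1}(\Gamma)$), while if $f(x_{0})=0$ at some $x_{0}\in\Gamma$ then $m_{f}$ fails to be Fredholm: localizing with a cutoff near $x_{0}$ and rescaling yields a singular Weyl sequence, normalized in $H^{\frac12}(\Gamma)$, on which $m_{f}$ is arbitrarily small. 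Applied with $f=\tfrac12(1+a)$, this gives exactly the equivalence between condition (ii) of Theorem~\ref{t:lip} and the present condition~(ii).

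The only non-routine ingredient is the compactness of $K$ on a Lyapunov boundary; once it is cited the theorem is a direct specialization of Theorem~\ref{t:lip}, so I do not expect any real obstacle beyond the standard Weyl-sequence construction used to rule out Fredholmness when $1+a$ vanishes.
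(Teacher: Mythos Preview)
Your proposal is correct and follows essentially the same route as the paper: the paper also derives Theorem~\ref{t:smooth} as an immediate corollary of Theorem~\ref{t:lip} by invoking compactness of $K$ on a $C^{1+\epsilon}$ boundary and then observing that $\tfrac12(1+a)\Id+\alpha K$ is Fredholm of index zero iff $1+a$ does not vanish on $\Gamma$. Your Weyl-sequence justification for the necessity direction of the latter fact is more detailed than what the paper gives (the paper simply asserts it), but the overall strategy is identical.
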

The conditions on the coefficient $a(x)$ obtained in Theorem~\ref{t:smooth} have been known for a long time as conditions for Fredholm properties of the scattering problem \eqref{e:scat}. In \cite{CoSte85}, the case of piecewise constant coefficients was treated. Using the method of boundary integral equations, the case of smooth boundaries in any dimension and the case of polygons in dimension two was studied. In the thesis \cite{ChesnelThesis2012} and the paper \cite{Chesneletal2012}, variational methods for the interface problem were used to obtain the same conditions in the case of smooth domains and also necessary and sufficient conditions for some non-smooth domains.


\end{document}